\newtheorem{introthm}{Theorem}
\newtheorem{lemma}{Lemma}[section]
\newtheorem{cor}[lemma]{Corollary}
\theoremstyle{remark}
\newtheorem{remark}[lemma]{Remark}
\renewcommand{\phi}{\varphi}
\renewcommand{\theta}{\vartheta}
\newcommand{\eps}{\varepsilon}
\renewcommand{\geq}{\geqslant}
\renewcommand{\leq}{\leqslant}
\newcommand{\reals}{\mathbb{R}}
\newcommand{\iso}{\cong}    
\newcommand{\Sym}[1]{\operatorname{Sym}(#1)}
\DeclarePairedDelimiter{\card}{\lvert}{\rvert}  
\DeclarePairedDelimiter{\erz}{\langle}{\rangle} 
\DeclareMathOperator{\GL}{GL}
\DeclareMathOperator{\AGL}{AGL}
\DeclareMathOperator{\Aut}{Aut}
\DeclareMathOperator{\Z}{\mathbf{Z}}     
\DeclareMathOperator{\C}{\mathbf{C}}     
\DeclareMathOperator{\N}{\mathbf{N}}
\DeclareMathOperator{\mat}{\mathbf{M}}      
\DeclareMathOperator{\conv}{conv} 
\DeclareMathOperator{\Id}{id}       
\begin{document}
\title{A property of the Birkhoff polytope}
\author{Barbara Baumeister}
\address{Universität Bielefeld\\
         Postfach 100131\\ 
         33501 Bielefeld\\
         Germany}
\email{b.baumeister@math.uni-bielefeld.de}
\author{Frieder Ladisch}
\address{Universität Rostock\\
         Institut für Mathematik\\
         18051 Rostock\\
         Germany}
\email{frieder.ladisch@uni-rostock.de}
\subjclass[2010]{Primary 52B15, 
Secondary 52B05, 52B12, 20B25, 20C15, 05E18}
\keywords{Birkhoff polytope, representation polytope,
          permutation polytope, combinatorial symmetry}

\begin{abstract}
  The Birkhoff polytope $B_n$ is the convex hull
  of all $n\times n$ permutation matrices in
  $\mathbb{R}^{n\times n}$.  
  We compute the combinatorial symmetry group 
  of the Birkhoff polytope.
  
  A representation polytope is 
  the convex hull of some finite matrix group
  $G\leq \operatorname{GL}(d,\mathbb{R})$.
  We show that the group of permutation matrices
  is essentially the only finite matrix group
  which yields a representation polytope 
  with the same face lattice as the Birkhoff polytope.  
\end{abstract}

\maketitle

\section{Introduction}
Let $P\colon G= S_n \to \GL(n,\reals)$ be the 
standard permutation representation of 
the symmetric group~$S_n$ on $n$ letters.
The \emph{Birkhoff polytope} $B_n$ is by definition
the convex hull of all permutation matrices of size $n\times n$:
\[ B_n := \conv\{ P(\sigma) \mid \sigma\in S_n\}.
\]
In this note, we prove a conjecture of
Baumeister, Haase, Nill and Paffenholz~\cite[Conjecture~5.3]{BHNP09}
on the uniqueness of the Birkhoff polytope among
permutation polytopes.
In fact, we prove a slightly stronger result.

To state the result, we need the following notation.
Let $D\colon G\to \GL(d,\reals)$
be a representation over the reals.
The corresponding \emph{representation polytope},
$P(D)$, is the convex hull of the image of $D$:
\[ P(D):= \conv\{ D(g) \mid g\in G
               \}.
\]
If $D$ is a permutation representation, then the 
representation polytope is called a 
\emph{permutation polytope}.

Two representations 
$D_i\colon G_i \to \GL(d_i,\reals)$
(where $i=1$, $2$) are called 
\emph{effectively equivalent} if there is 
a group isomorphism $\phi\colon G_1\to G_2$
such that $D_1$ and $D_2\circ \phi$ are 
\emph{stably equivalent},
which means that $D_1$ and $D_2\circ \phi$ 
have the same nontrivial irreducible constituents 
(not necessarily occurring with the same multiplicities).
The representation polytopes of effectively representations are
affinely isomorphic
\cite[\S~2]{BHNP09} \cite[Theorem~2.4]{BaumeisterGrueninger15}.
The converse is not true, for example, 
when $D$ is the regular representation of a group,
then $P(D)$ is a simplex of dimension $\card{G}-1$.
Thus groups that are not even isomorphic as abstract groups,
may yield affinely equivalent representation polytopes.

From this viewpoint, the next result is somewhat surprising.
Recall that two polytopes 
$P$ and $Q$ are
\emph{combinatorially equivalent}
if there is a bijection between the vertices of $P$
and the vertices of $Q$ which maps faces of $P$
onto faces of $Q$.
Affinely equivalent polytopes are combinatorially equivalent,
but not conversely.

\begin{introthm}\label{main:birkhoff_unique}
  Let $D\colon G\to \GL(d,\reals)$ be 
  a faithful representation such that
  the representation polytope~$P(D)$ is combinatorially equivalent
  to the Birkhoff polytope~$B_n$.
  Then either $n=3$ and $G$ is cyclic of order $6$,
  or $D$ and the standard permutation representation
  $P\colon S_n \to \GL(n,\reals)$ 
  are effectively equivalent  
  (in particular, $G\iso S_n$).
\end{introthm}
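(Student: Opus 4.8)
The plan is to use the fact that a representation polytope remembers its group as a simply transitive group of symmetries, and to read off enough numerical data to combine this with the combinatorial symmetry group of $B_n$. First I would note that, after conjugating $D$ into orthogonal form (an affine, hence combinatorial, change), all matrices $D(g)$ have the same Frobenius norm, so they lie on a sphere about $0$ in $\reals^{d\times d}$; a finite subset of a sphere is in convex position, and therefore the vertices of $P(D)$ are exactly the $D(g)$. With $D$ faithful and $P(D)$ combinatorially equivalent to $B_n$ this gives $\card{G}=\card{D(G)}=n!$. Second, the dimension of a representation polytope depends only on the nontrivial irreducible constituents of $D$ (it is a stable-equivalence invariant), being the sum $\sum_\chi(\deg\chi)^2$ over the distinct nontrivial constituents; since combinatorial equivalence preserves dimension, this sum equals $\dim B_n=(n-1)^2$.

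Next I would pass to symmetry groups. Left translation $X\mapsto D(g)X$ is linear and permutes the vertices, so $D(G)$ sits inside the affine, hence combinatorial, automorphism group of $P(D)$ as a subgroup acting simply transitively on vertices. Transporting this along the combinatorial equivalence exhibits $G\iso D(G)$ as a regular subgroup of $\Aut(B_n)$, which by our computation of the combinatorial symmetry group is $W=(S_n\times S_n)\rtimes\erz{\tau}=S_n\wr C_2$, acting on the vertex set $S_n$ via $(a,b)\colon\sigma\mapsto a\sigma b^{-1}$ and $\tau\colon\sigma\mapsto\sigma^{-1}$. The stabilizer of the base vertex $e$ is the diagonal $\Delta=\{(a,a):a\in S_n\}$, so a regular subgroup contained in $S_n\times S_n$ is exactly a complement to $\Delta$.

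The heart of the argument is the classification of these regular subgroups. Writing a complement as $H=\{(x\rho(x),\rho(x)):x\in S_n\}$, the bijection $x\mapsto(x\rho(x),\rho(x))$ identifies $H$ with $S_n$ under the twisted product $x*y=x\cdot{}^{\rho(x)}y$, where $\rho(e)=e$ and $\rho(x*y)=\rho(x)\rho(y)$; because $\rho$ need not be bijective, $(S_n,*)$ — and hence $H$ — need not be isomorphic to $S_n$. Indeed, for $n=3$ the complement $\erz{((123),(12))}\iso C_6$ arises, which is the source of the exceptional case. To control all complements I would feed in the numerical constraints of the first step: a regular subgroup is a group of order $n!$ admitting a faithful representation whose nontrivial constituents have squared degrees summing to only $(n-1)^2$, a very strong restriction. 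I expect this to force $G\iso S_n$ for $n\geq4$ and to leave precisely $S_n$ and $C_6$ when $n=3$, once the regular subgroups meeting the outer coset $(S_n\times S_n)\tau$ (on which $\tau$ interchanges the two factors) have been treated separately.

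Finally I would identify the representation itself. With $G\iso S_n$, the degree equation $\sum_\chi(\deg\chi)^2=(n-1)^2$ and the character degrees of $S_n$ (the nontrivial degrees are $1$ and then at least $n-1$, with exactly two irreducibles of degree $n-1$) force the nontrivial constituents to consist of a single irreducible of degree $n-1$, namely the standard representation $V$ or its twist $V\otimes\sign$. The hard part — the main obstacle — is to exclude $V\otimes\sign$: for $n\geq4$ no automorphism of $S_n$ sends $V$ to $V\otimes\sign$, so these are not effectively equivalent, and the difference is invisible to the group action, since both twists realise $S_n$ by the same left-regular action on vertices. Thus I must show $P(V\otimes\sign)\not\iso_{\mathrm{comb}}B_n$ by genuinely combinatorial means; I would do this by proving a face criterion for representation polytopes and comparing the local structure at the identity vertex — the edges and the $2$-faces they bound — of $P(V)$ and $P(V\otimes\sign)$. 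Once $V\otimes\sign$ is ruled out, $D$ is stably equivalent to the permutation representation and hence effectively equivalent to it; a direct verification that a suitable faithful $C_6$-representation gives a polytope combinatorially equal to $B_3$ then accounts for the remaining case and completes the proof.
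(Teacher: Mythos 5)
Your opening steps (vertices of $P(D)$ are exactly the $D(g)$ after orthogonalization, $\card{G}=n!$, and the squared-degree equation $\sum_\chi \chi(1)^2=(n-1)^2$ via the stable-equivalence invariance of $\dim P(D)$) are sound and consistent with the paper. But the ``heart of the argument'' has a genuine gap: you transport only the \emph{left} translations into $\Aut(B_n)\iso S_n\wr C_2$ and then hope to classify \emph{all} regular subgroups (complements to the diagonal), with the order-plus-dimension constraints forcing $G\iso S_n$ for $n\geq 4$. That expectation is false. Every exact factorization $S_n=AB$ with $A\cap B=1$ makes $A\times B\leq S_n\times S_n$ a regular subgroup: the orbit of the identity vertex is $AB=S_n$ and $\card{A\times B}=n!$, so the action is regular. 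Taking $A=S_{n-1}$ and $B=\erz{(1\,2\,\dotsc\,n)}$ gives the regular subgroup $S_{n-1}\times C_n$ for every $n$, and this group even passes your numerical filter: the tensor product of the standard representation of $S_{n-1}$ with a faithful linear character of $C_n$ is faithful of degree $n-2$, and adjoining $2n-3$ distinct nontrivial linear characters yields a faithful representation with squared-degree sum $(n-2)^2+(2n-3)=(n-1)^2$. So at every $n\geq 4$ your constraints leave non-$S_n$ candidates, and eliminating them would require exactly the face-lattice comparison you were trying to avoid; moreover the general classification of complements to the diagonal (your twisted products $x*y=x\cdot{}^{\rho(x)}y$) is a hard problem in its own right. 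The paper sidesteps all of this by transporting \emph{both} the left and the right translations, obtaining a \emph{pair of commuting} regular subgroups of $\Gamma(S_n)$, and then proving via the Chermak--Delgado measure (Lemmas~\ref{l:sn_cent_est} and~\ref{l:snwrc2_reg}) that for $n\geq 4$ the only such pair is $\{\lambda(S_n),\rho(S_n)\}$. Note that in the example above $\C_{S_n\times S_n}(S_{n-1}\times C_n)=1\times C_n$ contains no second regular subgroup --- the commutation condition is precisely the leverage you discarded.

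The second gap is the one you name yourself: excluding $V\otimes\sign$. The paper does this without any ``local face criterion''. First one needs your reduction ``the difference is invisible to the group action'' to be made precise: the vertex bijection conjugates the combinatorial symmetry group of $P(D)$ (which equals $\Gamma(S_n)$) to that of $B_n$, hence normalizes $\Gamma(S_n)$, and Corollary~\ref{c:normlgamma} (again resting on Chermak--Delgado) gives $\N_{\Sym{S_n}}(\Gamma(S_n))=(\Aut S_n)\Gamma(S_n)$, so after composing with effective equivalences one may assume the isomorphism sends $P(g)\mapsto D(g)$ for all $g$; your sketch omits this normalizer step entirely. Then the paper's Lemma~\ref{l:snbirk_unique} finishes with a short character-theoretic argument: the set $S_n\setminus H$ ($H$ a point stabilizer) is a facet of $B_n$, hence $D(S_n\setminus H)$ is a facet of $P(D)$; since the trivial character may be assumed absent, $\sum_{g\in S_n}D(g)=0$, so $\sum_{h\in H}D(h)\neq 0$, forcing $[\phi_H,1_H]\neq 0$, and Frobenius reciprocity gives $[\phi,\chi]\neq 0$ for the standard character $\chi$; your dimension equation then makes $\chi$ the \emph{unique} nontrivial constituent. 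This rules out $V\otimes\sign$ automatically, because its restriction to $S_{n-1}$ contains no trivial constituent. So the two steps you flag as open (classifying the regular subgroups, and distinguishing the two $(n-1)$-dimensional twists) are exactly where the paper's two main tools --- the commuting-pair lemma and the facet-barycenter-reciprocity argument --- do the work, and without them your outline does not close.
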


In the exceptional case $n=3$ and $G$ cyclic, it is easy to see that
$D$ is not stably equivalent to a permutation representation.
It follows also from the classification of permutation polytopes
in small dimensions~\cite[Theorem~4.1]{BHNP09}
that $B_3$ is not combinatorially equivalent 
to any other permutation polytope.
In particular, Theorem~\ref{main:birkhoff_unique} 
answers ~\cite[Conjecture~5.3]{BHNP09} in the positive.

To prove Theorem~\ref{main:birkhoff_unique}, 
we use the determination of the 
combinatorial symmetry group of the Birkhoff polytope,
which may be of interest in its own right:

\begin{introthm}\label{main:birkhoff_symgrp}
  For every combinatorial symmetry $\alpha$ of 
  the Birkhoff polytope 
  there are $\sigma$, $\tau\in S_n$ and $\eps\in \{\pm 1\}$
  such that $\alpha(\pi)= \sigma \pi^{\eps} \tau$
  for all $\pi\in S_n$.  
  Every combinatorial symmetry comes from an isometry of
  the space of $n\times n$ matrices over $\reals$.
\end{introthm}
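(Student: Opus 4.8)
The plan is to read everything off the face lattice of $B_n$, exploiting that its facets and its vertex--facet incidences are completely explicit. By the Birkhoff--von Neumann theorem $B_n$ is the set of doubly stochastic matrices, and for $n\geq 3$ its facets are exactly the $n^2$ sets $F_{ij}=\{X\in B_n: x_{ij}=0\}$ (the cases $n\leq 2$ are degenerate and can be handled directly). I identify the vertices with $S_n$ and the facets with the positions $(i,j)\in[n]\times[n]$, and record the one combinatorial datum I will use: the vertex $P(\pi)$ fails to lie on $F_{ij}$ exactly when $\pi(j)=i$, so the set of facets missed by $P(\pi)$ is its support $\{(\pi(j),j):j\in[n]\}$.

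A combinatorial symmetry $\alpha$ is an automorphism of the face lattice, hence permutes the atoms (vertices) and the coatoms (facets) while preserving incidence. Thus $\alpha$ induces a permutation $\hat\alpha$ of $[n]\times[n]$, and for every $\pi$ the support of $\alpha(\pi)$ equals $\hat\alpha(\operatorname{supp}\pi)$. The heart of the argument is to recover the grid structure of the positions from the lattice alone. I claim that two positions $a,b$ lie in a common row or column if and only if no vertex misses both $F_a$ and $F_b$: if $a,b$ share a row or column this is clear, since a permutation cannot repeat a value or an argument, whereas if $a=(i,j)$ and $b=(i',j')$ with $i\neq i'$ and $j\neq j'$ then any $\pi$ with $\pi(j)=i$ and $\pi(j')=i'$ misses both. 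Since the set of vertices not lying on a given facet is determined by the face lattice, this collinearity relation is preserved by $\hat\alpha$; that is, $\hat\alpha$ is an automorphism of the rook's graph $R_{n,n}$ on $[n]\times[n]$.

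Next I pin down the relevant structure of $R_{n,n}$ directly, avoiding any appeal to strong-regularity coincidences. For $n\geq 3$ the maximal cliques of $R_{n,n}$ are exactly the $n$ rows and the $n$ columns, each of size $n$, because three pairwise collinear cells must all lie in one row or one column. Hence $\hat\alpha$ permutes these $2n$ lines and preserves disjointness of lines, i.e.\ parallelism, which has exactly two classes: the rows and the columns. Therefore $\hat\alpha$ either fixes both classes, giving $\hat\alpha(i,j)=(\rho(i),\lambda(j))$ for some $\rho,\lambda\in S_n$, or interchanges them, giving $\hat\alpha(i,j)=(\rho(j),\lambda(i))$. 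Feeding this into $\operatorname{supp}\alpha(\pi)=\hat\alpha(\operatorname{supp}\pi)$ and solving for the underlying permutation yields $\alpha(\pi)=\rho\,\pi\,\lambda^{-1}$ in the first case and $\alpha(\pi)=\rho\,\pi^{-1}\lambda^{-1}$ in the second, which is the asserted form $\sigma\pi^{\eps}\tau$.

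Finally, each such $\alpha$ visibly lifts to a linear map $T(X)=P(\sigma)\,X^{(\eps)}P(\tau)$ on $\reals^{n\times n}$, where $X^{(1)}=X$ and $X^{(-1)}=X^{\mathsf T}$; since $P(\pi)^{\mathsf T}=P(\pi^{-1})$, this $T$ sends permutation matrices to permutation matrices and hence maps $B_n$ onto itself and induces $\alpha$. As $T$ merely permutes the entries of $X$ (possibly after transposing), it is orthogonal for the Frobenius inner product, so every combinatorial symmetry comes from an isometry. I expect the main obstacle to be the collinearity characterization together with the clique analysis of $R_{n,n}$: one must verify both directions of the collinearity criterion and confirm that for $n\geq 3$ no maximal clique other than a row or a column occurs, so that $\hat\alpha$ is forced into the wreath product $S_n\wr S_2$. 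The passage back to $\alpha(\pi)=\sigma\pi^{\eps}\tau$ and the isometry statement are then routine.
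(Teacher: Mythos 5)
Your proposal is correct, and it proves the theorem from the same raw data as the paper — the facet description $F_{ij}$ of the doubly stochastic matrices and the pattern of empty intersections among the vertex sets off the facets — but the endgame is organized differently. The paper works with the complements $A_{ij}=\{\pi\mid \pi(i)=j\}$, tabulates the intersection cardinalities $(n-1)!$, $0$, $(n-2)!$, and then runs a normalization argument: it composes the given symmetry $\alpha$ with suitable elements of $\Gamma(S_n)$ so that $\alpha$ fixes $A_{11}$, then $A_{12}$, then all $A_{1j}$ and $A_{j1}$, and finally observes that each $A_{kl}$ is the unique such set disjoint from both $A_{k1}$ and $A_{1l}$, forcing the normalized $\alpha$ to be the identity. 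You instead isolate the disjointness relation as adjacency in the rook's graph $R_{n,n}$, classify its automorphisms directly (the triangle argument showing every clique lies in a line, hence maximal cliques are exactly the rows and columns, and disjointness splits the $2n$ lines into the two parallel classes), and then solve $\operatorname{supp}\alpha(\pi)=\hat\alpha(\operatorname{supp}\pi)$ for $\alpha(\pi)$. The two routes are mathematically equivalent — the paper's step-by-step reduction is in effect an inline proof that the induced facet permutation lies in $S_n\wr C_2$ — but yours makes the structural statement ($\hat\alpha\in\Aut(R_{n,n})$ and $\Aut(R_{n,n})\iso S_n\wr C_2$) explicit and reusable, at the cost of having to verify the clique analysis; the paper's version buys a shorter, self-contained computation. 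Two small points you should make explicit to be fully rigorous: that for $n\geq 3$ the $n^2$ sets $F_{ij}$ are pairwise distinct (so facets really are indexed by $[n]\times[n]$), and that a vertex is determined by the set of facets it misses (its support), which is what licenses recovering $\alpha(\pi)$ from $\hat\alpha$; both are immediate from the intersection data, and the same facts are used implicitly in the paper's final step. Your treatment of the second claim (lifting $\alpha$ to $X\mapsto P(\sigma)X^{(\eps)}P(\tau)$, an entry permutation and hence a Frobenius isometry) matches the paper's remark that the symmetries in $\Gamma(S_n)$ act by permuting matrix coordinates.
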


As we will explain below, this means that for $n\geq 3$,
the combinatorial symmetry group of the Birkhoff polytope
is isomorphic to the wreath product 
$S_n \wr C_2 = (S_n \times S_n)\rtimes C_2$.

Although not difficult, this result seems not to be in the literature
yet.
There are, however, two different published proofs 
that the above maps 
are all the linear maps preserving the Birkhoff 
polytope~\cite{LiSpiZobin04,LiTamTsing02}.
Since every linear or affine symmetry of a polytope 
induces a combinatorial symmetry,
Theorem~\ref{main:birkhoff_symgrp} 
is actually stronger than the old result.
As one would expect, our proof of Theorem~\ref{main:birkhoff_symgrp}
depends on the well known description of 
the facets and thus the combinatorial structure
of the Birkhoff polytope.
On the other hand, the combinatorial structure of 
representation and permutation polytopes in general 
can be quite complicated,
even for cyclic groups, as examples show~\cite{BHNP11pre}.

\section{Preliminaries on permutation actions on a group}
Let $G$ be a finite group.
For each $g\in G$, let $\lambda_g\in \Sym{G}$ 
be left multiplication with $g$
(so $\lambda_g(x)= gx$),
and $\rho_g$ be right multiplication
with $g^{-1}$, that is, $\rho_g(x)= xg^{-1}$.
Thus $g\mapsto \lambda_g$ and $g\mapsto \rho_g$ are 
the left and right regular permutation action.
Also, let $\iota\in \Sym{G}$ be the map that 
inverts elements (so $\iota(x)= x^{-1}$ for all $x\in G$).
Let $\Gamma(G)\leq \Sym{G}$ be the group generated by all these
elements:
\[ \Gamma(G):= \erz{\lambda_g, \rho_g, \iota
                   \mid g\in G}.
\]
To describe $\Gamma(G)$, we need the
\emph{wreath product} $G\wr C_2$ 
of $G$ with a cyclic group~$C_2=\erz{s}$
of order $2$.
Recall that this is the semidirect product of
$G\times G$ with $C_2$, where $s$ acts on 
$G\times G$ by exchanging coordinates:
$(g,h)^s = (h,g)$ for $g$, $h\in G$.
Then:

\begin{lemma}
  If $G$ is not an elementary abelian $2$-group,
  then $\Gamma(G) \iso (G\wr C_2)/Z$,
  where $Z = \{(z,z)\in G \times G \mid z \in \Z(G)\}$.
\end{lemma}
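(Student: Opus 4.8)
The plan is to build an explicit surjective homomorphism $\phi\colon G\wr C_2 \to \Gamma(G)$ and then identify its kernel with $Z$. First I would define $\phi$ on the base group $G\times G$ by sending $(g,h)$ to the permutation $x\mapsto gxh^{-1}$, that is, to the product $\lambda_g\rho_h$, and I would send the generator $s$ of $C_2$ to the inversion map $\iota$. To see that this really gives a homomorphism out of the wreath product, the only relation to check (beyond the obvious facts that $\lambda_g$ and $\rho_h$ commute and that $\iota^2=\Id$) is that conjugation by $\iota$ swaps the two coordinates. This follows from the computation $\iota\lambda_g\iota = \rho_g$ and $\iota\rho_h\iota=\lambda_h$, so that $\iota(\lambda_g\rho_h)\iota = \lambda_h\rho_g$, matching the wreath action $(g,h)^s=(h,g)$. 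Surjectivity is then immediate, since $\lambda_g=\phi(g,1)$, $\rho_g=\phi(1,g)$ and $\iota=\phi(s)$ generate $\Gamma(G)$.

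The remaining work is to compute $\ker\phi$. An element of $G\wr C_2$ lies either in the base group $G\times G$ or in its nontrivial coset. On the base group, $(g,h)$ acts as the identity precisely when $gxh^{-1}=x$ for all $x\in G$; setting $x=1$ forces $g=h$, and then $gx=xg$ for all $x$ forces $g\in\Z(G)$. Thus the part of the kernel inside $G\times G$ is exactly $Z$, and one checks readily that $Z$ is central in $G\wr C_2$ (it is fixed by $s$ and centralized by $G\times G$), hence normal, so the quotient $(G\wr C_2)/Z$ makes sense.

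The main obstacle — and the only place the hypothesis on $G$ enters — is ruling out kernel elements in the nontrivial coset. An element $(g,h)s$ acts as $x\mapsto gx^{-1}h^{-1}$, which is the identity permutation iff $gx^{-1}h^{-1}=x$ for all $x$. Evaluating at $x=1$ gives $g=h$, and the condition becomes $g^{-1}xg=x^{-1}$ for all $x$, i.e.\ the inversion map agrees with conjugation by $g^{-1}$. But inversion is an anti-automorphism while conjugation is an automorphism, so their coincidence would force $(xy)^{-1}=x^{-1}y^{-1}$, meaning $G$ is abelian, and then $x^{-1}=x$ for all $x$, i.e.\ $G$ is an elementary abelian $2$-group, contrary to hypothesis. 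Hence no element of the nontrivial coset lies in $\ker\phi$, so $\ker\phi=Z$ and $\Gamma(G)\iso(G\wr C_2)/Z$, as claimed. The delicate point to state carefully is exactly this dichotomy between automorphism and anti-automorphism, since it is what makes the excluded case genuinely exceptional.
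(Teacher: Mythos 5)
Your proposal is correct and follows essentially the same route as the paper: the same surjection $G\wr C_2\to\Gamma(G)$, the same kernel computation on the base group yielding $Z$, and the same analysis of the coset $(G\times G)s$, where your conceptual observation that inversion (an anti-automorphism) would have to coincide with a conjugation (an automorphism) is just a repackaging of the paper's direct computation $xy=g(xy)^{-1}g^{-1}=yx$. The only inessential difference is that you verify normality of $Z$ separately, which is redundant once $Z$ is identified as the full kernel.
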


\begin{proof}
  We have that $\lambda(G)$ and $\rho(G)$ centralize each other,
  and $(\lambda_g)^{\iota} = \rho_g$.
  Thus sending $(g,h)\in G\times G$ to
  $\lambda_g \rho_h$ and $s\in C_2 = \{1,s\}$
  to $\iota$ defines a surjective group homomorphism
  $G\wr C_2 \to \Gamma(G)$
  with $Z$ in the kernel.
  
  Suppose $\lambda_g \rho_h = \Id_G$.
  Then $gxh^{-1} = x$ for all $x\in G$.
  Taking $x=1$ yields $g=h$, 
  and it follows that  $g\in \Z(G)$.
  
  Now assume $\lambda_g \rho_h \iota = \Id$.
  Then $gx^{-1}h^{-1} = x$ for all $x\in G$,
  and $x=1$ yields $g=h$.
  Moreover, we have
  $xy = g (xy)^{-1} g^{-1}
      = g y^{-1} g^{-1} \, g x^{-1} g^{-1}
      = y x$
  for all $x$, $y\in G$.
  Thus $G$ must be abelian in this case, and
  $x^{-1} = x$ for all $x\in G$.
    
  So when $G$ is not an elementary abelian $2$-group,
  such an element can not be in the kernel of the action
  of $G\wr C_2$ on $G$.
  This shows the result.
\end{proof}

In the proof of Theorem~\ref{main:birkhoff_unique}, 
we need the fact
that $\Gamma(G)$ contains no pair of commuting,
regular subgroups other than $\lambda(G)$
and $\rho(G)$, when $G = S_n$ and $n\geq 4$.
The exception in Theorem~\ref{main:birkhoff_unique}
for $n=3$
comes from the fact that 
in $\Gamma(S_3)$, we have other pairs of commuting, regular
subgroups, namely $U=V = C_2 \times C_3$
and $U=V= C_3\times C_2$.
Notice that we do not assume that the commuting, regular subgroups
$U$, $V$
of $\Gamma(G)$ have trivial intersection.
If one assumes $U\cap V=1$, one can give a somewhat 
shorter proof that 
$\{U,V\} = \{ \lambda(G), \rho(G)\}$
for almost simple groups $G$,
but we need the stronger statement for the proof of 
Theorem~\ref{main:birkhoff_unique}.

The most elegant and elementary way to prove 
that $\lambda(G)$ and $\rho(G)$ form the only pair
of commuting regular subgroups of $\Gamma(G)$
(when $G= S_n$, $n\geq 4$),
seems to be to use a general argument due to
Chermak and Delgado~\cite{ChermakDelgado89}.
Let $G$ be an arbitrary finite group.
Following Isaacs~\cite[\S~1G]{IsaacsFGT},
we call $m_G(H) := \card{H}\card{\C_G(H)}$
the \emph{Chermak-Delgado measure} of the subgroup
$H\leq G$.

\begin{lemma}\cite[Theorem~1.44]{IsaacsFGT}
  \label{l:cdlatt}
  Let $G$ be a finite  group and let
  $\mathcal{L} = \mathcal{L}(G)$ be the set of subgroups
  for which the Chermak-Delgado measure is as large as 
  possible.
  Then for $H$, $K\in \mathcal{L}$, we have
  $H\cap K\in \mathcal{L}$,
  $\erz{H,K} =HK=KH \in \mathcal{L}$, 
  and $\C_G(H)\in \mathcal{L}$.
\end{lemma}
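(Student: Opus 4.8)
The plan is to reduce the whole statement to a single submultiplicative inequality for the Chermak--Delgado measure, namely
\[
  m_G(H)\,m_G(K) \leq m_G(H\cap K)\,m_G(\erz{H,K})
\]
for all subgroups $H$, $K \leq G$, and then to exploit that $H$, $K \in \mathcal{L}$ force the left-hand side to equal $(m^*)^2$, where $m^*$ denotes the common maximal value of $m_G$ on $\mathcal{L}$. Since each factor on the right is at most $m^*$, a sandwich argument forces both factors to equal $m^*$, so that $H\cap K$ and $\erz{H,K}$ lie in $\mathcal{L}$ simultaneously.

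To prove the inequality, I would first record two elementary centralizer facts. An element of $G$ centralizes $\erz{H,K}$ if and only if it centralizes both $H$ and $K$, so $\C_G(\erz{H,K}) = \C_G(H)\cap \C_G(K)$. And if $a\in \C_G(H)$, $b\in \C_G(K)$, then $ab$ fixes every element of $H\cap K$, giving the \emph{set} inclusion $\C_G(H)\,\C_G(K) \subseteq \C_G(H\cap K)$. I would then combine these with the standard counting identity $\card{A}\card{B} = \card{A\cap B}\card{AB}$, valid for the product set $AB$ of any two subgroups whether or not $AB$ is a subgroup. Splitting $m_G(H)m_G(K)$ into the four factors $\card{H}$, $\card{K}$, $\card{\C_G(H)}$, $\card{\C_G(K)}$ and reassembling via the counting identity applied to the pair $(H,K)$ and to the pair $(\C_G(H),\C_G(K))$, the estimates $\card{HK}\leq \card{\erz{H,K}}$ and $\card{\C_G(H)\C_G(K)}\leq \card{\C_G(H\cap K)}$ yield, after a short rearrangement,
\[
  \frac{m_G(H)m_G(K)}{m_G(H\cap K)\,m_G(\erz{H,K})}
    = \frac{\card{HK}\,\card{\C_G(H)\C_G(K)}}
           {\card{\erz{H,K}}\,\card{\C_G(H\cap K)}}
    \leq 1.
\]

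The equality analysis then delivers the structural refinements. For $H$, $K\in\mathcal{L}$ equality holds throughout, so in particular $\card{HK}=\card{\erz{H,K}}$; since $HK\subseteq\erz{H,K}$ and the cardinalities agree, $HK$ already equals the subgroup $\erz{H,K}$, whence $\erz{H,K}=HK=KH$. For the centralizer claim I would invoke the general containment $H\subseteq \C_G(\C_G(H))$, which gives $m_G(\C_G(H)) = \card{\C_G(H)}\card{\C_G(\C_G(H))} \geq \card{\C_G(H)}\card{H} = m_G(H)$; thus $H\in\mathcal{L}$ forces $m_G(\C_G(H))\geq m^*$, and by maximality $\C_G(H)\in\mathcal{L}$.

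I expect the main obstacle to be the bookkeeping in the key inequality: one must separate the measures into the correct four cardinality factors and recombine them, the two crucial points being the set inclusion $\C_G(H)\,\C_G(K)\subseteq \C_G(H\cap K)$ (which uses the explicit centralizer description, not merely subgroup structure) and the identity $\C_G(\erz{H,K})=\C_G(H)\cap\C_G(K)$. Once the inequality and its equality case are in place, the remaining deductions are routine.
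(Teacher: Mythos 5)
Your proof is correct, and it is essentially the argument the paper relies on: the paper offers no proof of its own but cites Isaacs~\cite[Theorem~1.44]{IsaacsFGT}, whose proof runs exactly through your submultiplicative inequality $m_G(H)m_G(K)\leq m_G(H\cap K)\,m_G(\erz{H,K})$ (via $\C_G(\erz{H,K})=\C_G(H)\cap\C_G(K)$, the set inclusion $\C_G(H)\C_G(K)\subseteq\C_G(H\cap K)$, and the counting identity $\card{A}\card{B}=\card{A\cap B}\card{AB}$), the sandwich/equality analysis giving $HK=\erz{H,K}$, and the duality step $H\subseteq\C_G(\C_G(H))$ for the centralizer claim. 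All steps, including the equality cases, check out.
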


The \emph{Chermak-Delgado lattice} of $G$ is by definition
the set of all subgroups of $G$ for which 
the Chermak-Delgado measure is maximized.
The last result tells us that this is indeed a sublattice
of the lattice of all subgroups of $G$.
We need the following, which is probably well known:

\begin{cor}
  Any member of the Chermak-Delgado lattice
  of a finite group $G$ is subnormal in $G$.
\end{cor}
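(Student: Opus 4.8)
The plan is to argue by induction on $\card{G}$ that every $H\in\mathcal{L}(G)$ is subnormal. The starting observation is that the Chermak--Delgado measure is invariant under conjugation, since $m_G(H^g)=\card{H^g}\card{\C_G(H^g)}=\card{H}\card{\C_G(H)^g}=m_G(H)$; hence $\mathcal{L}(G)$ is stable under $\Aut(G)$, in particular under inner automorphisms. By Lemma~\ref{l:cdlatt} the join $M:=\erz{H\mid H\in\mathcal{L}(G)}$ of all members is again a member, so it is the unique maximal element of $\mathcal{L}(G)$; being the largest member and setwise fixed under conjugation, $M\trianglelefteq G$. If $M<G$, I would first check that $\mathcal{L}(G)\subseteq\mathcal{L}(M)$: for $H\in\mathcal{L}(G)$ the centralizer $\C_G(H)\in\mathcal{L}(G)$ lies in $M$, so $\C_M(H)=\C_G(H)$ and $m_M(H)=m_G(H)$; since $m_M(K)\leq m_G(K)$ for every $K\leq M$, the maximum of $m_M$ is attained exactly on $\mathcal{L}(G)$. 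Then induction applied to $M$ shows $H$ is subnormal in $M$, and $M\trianglelefteq G$ gives subnormality in $G$. This disposes of everything except the case $M=G$, i.e.\ $G\in\mathcal{L}(G)$.

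For the remaining case I would reduce an arbitrary member to a self-centralizing one. Given $H\in\mathcal{L}(G)$, set $H_0=H$ and $H_{i+1}=H_i\C_G(H_i)$. Each $H_i$ is a member (a join of two members), and because $\C_G(H_i)$ centralizes, hence normalizes, $H_i$, one has $H_i\trianglelefteq H_{i+1}$; thus the chain is an ascending subnormal series terminating at a member $E$ with $\C_G(E)\leq E$, that is, $\C_G(E)=\Z(E)$. It therefore suffices to prove that such a self-centralizing member $E$ is subnormal. Here a useful rigidity appears: since $E$ and $\C_G(E)=\Z(E)$ are both members, $\card{\Z(E)}\card{\C_G(\Z(E))}=m_G(\Z(E))=m_G(E)=\card{E}\card{\Z(E)}$, so $\card{\C_G(\Z(E))}=\card{E}$; as $E\leq\C_G(\Z(E))$ always, this forces $\C_G(\Z(E))=E$. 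Thus $E$ and $\Z(E)$ form a mutually centralizing dual pair. Now I would pass to the normal closure $E^G=\erz{E^g\mid g\in G}$, which is a member and is normal in $G$; crucially $\C_G(E)=\Z(E)\leq E\leq E^G$, so exactly as above $E\in\mathcal{L}(E^G)$. If $E^G<G$, induction finishes the proof.

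The single point that resists these reductions --- and the step I expect to be the main obstacle --- is a proper self-centralizing member $E<G$ whose normal closure is all of $G$. This is not a mere technicality: a proper subnormal subgroup always lies in a proper normal subgroup, so $E^G=G$ with $E<G$ would directly contradict the conclusion, and the whole content of the statement is concentrated in excluding this configuration. I would attack it through the dual pair above. Writing $A=\Z(E)$, so that $\C_G(A)=E$ and $\C_G(E)=A$, the hypothesis $E^G=G$ gives $\C_G(A^G)=\bigcap_g\C_G(A^g)=\bigcap_g E^g=\operatorname{Core}_G(E)$ and symmetrically $\operatorname{Core}_G(A)=\C_G(E^G)=\Z(G)$, while the maximality $G\in\mathcal{L}(G)$ pins down the orders via $\card{A^G}\card{\operatorname{Core}_G(E)}=\card{E}\card{A}$. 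The goal would be to show this forces the conjugates of $A$ to commute, so that $A^G$ is an abelian normal subgroup with $A^G\leq\C_G(A^G)=\operatorname{Core}_G(E)\leq E$; tracing the resulting order relations back through the dual pair should collapse $A^G$ onto $A=\Z(G)$ and hence $E=\C_G(A)$ onto $G$, contradicting $E<G$. Making this collapse rigorous --- essentially proving that a proper, self-centralizing member cannot have full normal closure --- is the crux; once it is in place, the inductive machinery above yields subnormality of every member.
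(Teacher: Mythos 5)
Your reductions are all correct as far as they go: conjugation-invariance of $m_G$, the normality of the maximal member $M$, the inclusion $\mathcal{L}(G)\subseteq\mathcal{L}(M)$ when $M<G$, the ascending chain $H_i\trianglelefteq H_i\C_G(H_i)$ terminating in a self-centralizing member $E$, the rigidity $\C_G(\Z(E))=E$, and the descent to $\mathcal{L}(E^G)$ when $E^G<G$. But the proof is incomplete, and the hole is not peripheral: the case you isolate at the end --- a proper member $E<G$ with $\C_G(E)=\Z(E)$ and $E^G=G$ --- is, as you yourself observe, exactly the configuration the corollary forbids (a proper subnormal subgroup always lies in a proper normal subgroup). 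Your inductive machinery has therefore not reduced the difficulty but concentrated all of it into this one case, and the attack you sketch for it is not an argument: you give no reason why distinct conjugates of $A=\Z(E)$ should commute, and the order identities you list ($\C_G(A^G)=\operatorname{Core}_G(E)$, $\C_G(E^G)=\Z(G)$, and the measure equalities) place no constraint on commutators between $A^g$ and $A^{h}$ for $g\neq h$. I see no route from the Chermak--Delgado data alone to the abelianness of $A^G$, so the crux step remains unproved.

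It is instructive to compare with the paper's proof, which is two lines and bypasses all of your reductions: since every conjugate $H^g$ is again a member, Lemma~\ref{l:cdlatt} gives $HH^g=H^gH$ for all $g\in G$, and a subgroup that permutes with all of its conjugates is subnormal by \cite[Theorem~2.8]{IsaacsFGT} (a nontrivial criterion, proved via Wielandt's zipper lemma). That citation is precisely the ingredient your proposal is missing: the permutability $HH^g=H^gH$ is the one piece of structural information the lattice property hands you about conjugates, and you never use it. If you import that criterion, your remaining case evaporates --- but then so does the need for the inductive scaffolding, since the criterion applies directly to an arbitrary member. Conversely, if your aim was to avoid the zipper lemma and be self-contained, be aware that your final case carries essentially the full weight of that theorem, so some genuinely new idea would be required there.
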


\begin{proof}
If $H$ is a member of the Chermak-Delgado lattice of $G$,
then any conjugate $H^g$ is also in the 
Chermak-Delgado lattice,
and so $HH^g = H^g H$ by Lemma~\ref{l:cdlatt}.
But subgroups $H\leq G$ with $HH^g = H^gH$ for all $g\in G$
are subnormal~\cite[Theorem~2.8]{IsaacsFGT}.
\end{proof}

\begin{lemma}\label{l:sn_cent_est}
    Suppose that $G$ is almost simple
    (that is, $G$ has a nonabelian simple
    socle). 
    Then $\card{U}\card{\C_{G}(U)} \leq \card{G}$
    for any subgroup $U\leq G$,
    and equality holds if and only if
    $U = \{1\}$ or $U=G$.
    In particular, this holds for 
    $G= S_n$, $n\geq 5$.
    The conclusion is also true for $G=S_4$.
\end{lemma}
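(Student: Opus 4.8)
The plan is to apply the Chermak--Delgado machinery to reduce the inequality to a statement about subnormal subgroups, where the almost simple structure of $G$ forces a clean answer. Write $m_G(U)=\card{U}\card{\C_G(U)}$ and let $M$ be the maximum of $m_G$ over all subgroups of $G$. By definition the subgroups attaining $M$ are exactly the members of the Chermak--Delgado lattice, and by the preceding Corollary every such member is subnormal in $G$. Hence it suffices to evaluate $m_G$ on subnormal subgroups: once I know $M\leq\card{G}$, the bound $m_G(U)\leq\card{G}$ holds for \emph{all} $U$, and any $U$ with $m_G(U)=\card{G}$ must itself lie in the Chermak--Delgado lattice, hence be subnormal.

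Two structural facts about an almost simple $G$ with socle $S$ drive the argument. First, $\C_G(S)=1$: the centralizer of the normal subgroup $S$ is normal in $G$, so if it were nontrivial it would contain the unique minimal normal subgroup $S$, forcing $S$ to be abelian, a contradiction; in particular $\Z(G)=1$. Second, every nontrivial subnormal subgroup $H$ of $G$ contains $S$; equivalently, a subnormal $H$ with $H\cap S=1$ is trivial. I would prove the latter by induction on the subnormal defect of $H$. Since $H\cap S$ is subnormal in the simple group $S$, it is $1$ or $S$, and here it equals $1$. Choosing a chain $H\trianglelefteq H_1\trianglelefteq\dots\trianglelefteq G$, the subnormal subgroup $H_1$ meets $S$ in $1$ or $S$: if $H_1\cap S=1$, the inductive hypothesis gives $H_1=1$ and hence $H=1$; if $S\leq H_1$, then $H$ and $S$ are both normal in $H_1$ with $H\cap S=1$, so $[H,S]=1$ and therefore $H\leq\C_G(S)=1$.

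With these facts the evaluation is immediate. On the trivial subgroup $m_G(1)=\card{G}$, and on the whole group $m_G(G)=\card{G}\card{\Z(G)}=\card{G}$. For a nontrivial subnormal $H$ we have $S\leq H$, whence $\C_G(H)\leq\C_G(S)=1$ and $m_G(H)=\card{H}\leq\card{G}$, with equality precisely when $H=G$. Therefore $M=\card{G}$, and the only subgroups achieving this value are $\{1\}$ and $G$. Since $A_n$ is simple and nonabelian for $n\geq 5$, the group $S_n$ is almost simple with socle $A_n$, so the statement applies verbatim in that range.

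Finally, $S_4$ is not almost simple, since its socle is the Klein four-group, so I would settle it by a direct inspection of the subgroup lattice: running through the conjugacy classes of subgroups of $S_4$ and computing $\card{U}\card{\C_{S_4}(U)}$ shows that every proper nontrivial subgroup yields a value at most $16<24$, while $\{1\}$ and $S_4$ both give $24$. I expect the main obstacle to be the subnormal-subgroup step: the reduction through the Chermak--Delgado lattice is formal once the Corollary is in hand, but one must argue carefully that a nontrivial subnormal subgroup cannot avoid the socle, and the short induction above is the crux of the matter.
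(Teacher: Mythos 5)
Your proof is correct and takes essentially the same route as the paper: both reduce via the Chermak--Delgado measure and the subnormality corollary to the observation that a nontrivial subnormal subgroup of an almost simple group contains the socle and hence has trivial centralizer, forcing the maximizing subgroups to be exactly $\{1\}$ and $G$. The only differences are cosmetic --- you spell out the socle-containment induction and the $S_4$ inspection (both of which the paper leaves as known or as a ``simple verification''), and you use $\C_G(H)\leq \C_G(S)=1$ directly where the paper instead invokes closure of the Chermak--Delgado lattice under centralizers.
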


\begin{proof}
  Suppose that $1\neq H$ is a member of
  the Chermak-Delgado lattice.
  Then $H$ is subnormal and thus contains the nonabelian
  simple socle of $G$.
  It follows that $\Z(H)=1 = H \cap \C_G(H)$.
  Since $\C_G(H)$ is also a member of the Chermak-Delgado lattice,
  we must have $\C_G(H)=1$.
  Since $\card{H}\card{\C_G(H)} = \card{H} \leq \card{G}$
  was supposed to be maximal possible, we see that
  $H=G$.
  Thus the Chermak-Delgado lattice contains exactly the groups
  $1$ and $G$ itself,
  and the first assertion follows.
  The case $G=S_4$ is a simple verification.
\end{proof}

We will need the following application (for $G=S_n$):

\begin{lemma}\label{l:snwrc2_reg}
  Let $G$ be a group such that the Chermak-Delgado lattice of $G$
  contains exactly the groups $1$ and $G$.
  Then $\lambda(G)$, $\rho(G)$ is the only pair of
  commuting, regular subgroups of\/ $\Gamma(G)$.
\end{lemma}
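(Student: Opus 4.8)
The plan is to first extract the two numerical consequences of the hypothesis that will drive everything. Since the trivial subgroup and $G$ both belong to the Chermak–Delgado lattice, they have the same (maximal) measure; as $m_G(1)=\card{G}$ and $m_G(G)=\card{G}\card{\Z(G)}$, this forces $\Z(G)=1$, and moreover $\card{H}\card{\C_G(H)}\leq\card{G}$ for every $H\leq G$, with equality exactly when $H\in\{1,G\}$. Because $\Z(G)=1$, the group is nonabelian (or trivial), so the first lemma of this section applies with $Z=1$ and gives $\Gamma(G)=\Gamma_0\rtimes\erz{\iota}\cong G\wr C_2$, where $\Gamma_0:=\lambda(G)\rho(G)\cong G\times G$ has index two and conjugation by $\iota$ interchanges the two direct factors. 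I identify $\Gamma_0$ with $G\times G$ so that $\lambda(G)=G\times 1$, $\rho(G)=1\times G$, and $(g,h)\in\Gamma_0$ acts by $x\mapsto gxh^{-1}$; conjugation by $\iota$ is then the swap $(g,h)\mapsto(h,g)$.

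The engine of the argument is a projection computation for regular subgroups that lie inside $\Gamma_0$. Suppose $U,V\leq\Gamma_0$ are commuting and regular, so $\card{U}=\card{V}=\card{G}$. Writing $P_i:=\pi_i(U)$ for the two coordinate projections, I have $U\leq P_1\times P_2$, and since $V$ centralises $U$, $V\leq\C_{G\times G}(U)=\C_G(P_1)\times\C_G(P_2)$. The measure inequality then yields $\card{G}=\card{V}\leq\card{\C_G(P_1)}\card{\C_G(P_2)}\leq\card{G}^2/(\card{P_1}\card{P_2})$, so that $\card{P_1}\card{P_2}\leq\card{G}\leq\card{U}\leq\card{P_1}\card{P_2}$. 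Equality throughout forces $U=P_1\times P_2$ together with $m_G(P_i)=\card{G}$, whence $P_i\in\{1,G\}$ by the hypothesis. Since $\card{P_1}\card{P_2}=\card{G}\neq 1$, exactly one $P_i$ equals $G$, so $U=\lambda(G)$ or $U=\rho(G)$; the centraliser computation (using $\Z(G)=1$) then pins $V$ to the other one. This is precisely the place where the classification of the Chermak–Delgado lattice is used.

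What remains, and what I expect to be the main obstacle, is to show that a regular subgroup can contain no orientation-reversing element, i.e.\ that every regular $U$ is contained in $\Gamma_0$; without this the clean bound above is unavailable. Suppose $U$ contained some $\tau=(a,b)\iota\notin\Gamma_0$. Then $U_0:=U\cap\Gamma_0$ has index two, so $\card{U_0}=\card{G}/2$, and conjugation by $\tau$ is the automorphism $s(g,h)=(aha^{-1},bgb^{-1})$ of $G\times G$, whose fixed-point subgroup is isomorphic to $\C_G(ab)$ via the first coordinate. Each element of $V_0:=V\cap\Gamma_0$ centralises both $U_0$ and $\tau$, so $V_0$ lies in $\C_G(P_1)\times\C_G(P_2)$ and in the fixed-point subgroup of $s$; projecting to the first coordinate embeds $V_0$ into $\C_G(P_1)$. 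As $\card{V_0}\geq\card{G}/2$, the measure inequality gives $\card{G}/2\leq\card{\C_G(P_1)}\leq\card{G}/\card{P_1}$, so $\card{P_1}\leq 2$. Conjugation by $\tau$ interchanges the two projections of $U_0$ up to conjugacy, so $\card{P_2}=\card{P_1}\leq 2$ as well, whence $\card{U_0}\leq\card{P_1}\card{P_2}\leq 4$ and $\card{G}\leq 8$. A direct inspection of groups of order at most $8$ with trivial centre (the only one being $S_3$, whose Chermak–Delgado lattice is $\{C_3\}$) shows that none satisfies the hypothesis. This contradiction proves $U\leq\Gamma_0$, and by the same argument $V\leq\Gamma_0$, so the second paragraph applies and gives $\{U,V\}=\{\lambda(G),\rho(G)\}$.
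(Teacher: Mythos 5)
Your proposal is correct, and its first and last counting steps coincide with the paper's: extracting $\Z(G)=1$ and the bound $m_G(H)\leq\card{G}$ from the hypothesis, and then, for commuting regular $U,V\leq\lambda(G)\rho(G)\iso G\times G$, squeezing $\card{U}\card{V}$ between $\card{G}^2$ and the product of projections and their centralizers to force $\{U,V\}=\{\lambda(G),\rho(G)\}$. Where you genuinely diverge is the step you correctly identified as the main obstacle, namely showing that a regular subgroup contains no element $\tau=\lambda_a\rho_b\,\iota$ outside $\Gamma_0=\lambda(G)\rho(G)$. The paper disposes of this with a two-line fixed-point trick that needs no commuting partner: if $u=\lambda_g\rho_h\iota\in U$, then $u^2(x)=ghxg^{-1}h^{-1}$ fixes $g$, so regularity forces $u^2=\Id$, hence $gh=hg$ and $gh\in\Z(G)=1$; but then $u(x)=gx^{-1}g$ again fixes $g$, contradicting regularity. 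Your route instead leans on the partner $V$: embedding $V_0=V\cap\Gamma_0$ into $\C_G(P_1)$ via the fixed subgroup of the swap-type automorphism induced by $\tau$, using the Chermak--Delgado bound to get $\card{P_1}=\card{P_2}\leq 2$, hence $\card{G}\leq 8$, and finishing by inspecting trivial-centre groups of order at most $8$ (indeed only $S_3$, whose Chermak--Delgado lattice is $\{C_3\}$, so the hypothesis fails). Your argument is sound --- the index-two counting, the injectivity of the first projection on the fixed subgroup of $s$, and the conjugacy $P_1=aP_2a^{-1}$ all check out --- but it is heavier than necessary and proves a slightly weaker intermediate fact, since it only excludes such $\tau$ in the presence of a commuting regular partner, whereas the paper's argument shows outright that \emph{every} regular subgroup of $\Gamma(G)$ lies in $\Gamma_0$ whenever $\Z(G)=1$, with no case analysis or appeal to a classification of small groups.
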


\begin{proof}
    Notice that $\Z(G)=\{1\}$, since otherwise
    $m_G(\Z(G))=\card{\Z(G)}\card{G} > \card{G} = m_G(1)$.
    Thus $\Gamma(G)\iso G\wr C_2$
    and $\lambda(G)\rho(G) \iso G\times G$.
    
    We first show that a regular subgroup $U$ of 
    $\Gamma(G)$ is contained in the normal subgroup
    $\lambda(G) \rho(G)$.
    Otherwise, $U$
    contains an element $u = \lambda_g \rho_h \iota$
    sending $x\in G$ to $ g x^{-1} h^{-1}$.
    Then $u^2$ sends $x$ to $gh x g^{-1}h^{-1}$,
    and in particular fixes $g$.
    By regularity, we must have $u^2 = \Id_G$. 
    This implies 
    $gh = hg $ and 
    $gh\in \Z(G) = \{1\}$.
    Thus $u$ sends $x$ to $g x^{-1}g$,
    and so fixes $g$, too, 
    which contradicts the regularity. 
    This shows that $U \leq \lambda(G) \rho(G)$.
    
    Since $\lambda(G) \rho(G)\iso G\times G$, we may work
    in $G\times G$ from now on.
    Suppose that $U$ and $V\leq G\times G$ 
    both have size $\card{G}$,
    and commute with each other.
    Let $U_L$ be the projection of $U$ 
    onto the first component, 
    that is, the subgroup of elements
    $g\in G$ such that there is an $h\in G$
    with $(g,h)\in U$.
    Let $U_R$ be the projection of $U$ on the second component.
    With this notation,
    $\C_{G\times G}(U) = \C_{G}(U_L)\times \C_{G}(U_R)$.
    Thus 
    \[ \card{G}^2 
       = \card{U}\card{V}
       \leq \card{U_L}\card{U_R}
            \card{\C_{G}(U_L)}\card{\C_{G}(U_R)}
       \leq \card{G}^2,
    \]
    where the last inequality follows from our assumption
    on the Chermak-Delgado lattice of $G$.
    Thus equality holds, and it follows also that
    $U_L$ and $U_R$ are trivial or the group $G$ itself.
    Since both $U$ and $V$ have size $\card{G}$,
    it follows that
    $\{U,V\} = \{G\times 1, 1\times G\}$.
\end{proof}

\begin{cor}\label{c:normlgamma}
  Let $G$ be a group such that the Chermak-Delgado lattice of $G$
  contains exactly the groups $1$ and $G$.
  Then $\N_{\Sym{G}}(\Gamma(G)) = (\Aut G)\Gamma(G)$.    
\end{cor}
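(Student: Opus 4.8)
The plan is to prove the two inclusions separately, with $(\Aut G)\Gamma(G) \subseteq \N_{\Sym{G}}(\Gamma(G))$ being routine and the reverse inclusion carrying the real content.

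For the first inclusion I would check directly that $\Aut G$, regarded inside $\Sym{G}$ as the permutations of $G$ respecting the group operation, normalizes $\Gamma(G)$. A one-line computation gives $\alpha \lambda_g \alpha^{-1} = \lambda_{\alpha(g)}$, $\alpha \rho_g \alpha^{-1} = \rho_{\alpha(g)}$ and $\alpha \iota \alpha^{-1} = \iota$ for every $\alpha \in \Aut G$, using only that $\alpha$ is an automorphism. Thus conjugation by $\alpha$ permutes the generators of $\Gamma(G)$ and preserves $\Gamma(G)$. In particular $\Aut G$ normalizes $\Gamma(G)$, so the set product $(\Aut G)\Gamma(G)$ is a genuine subgroup and is contained in the normalizer.

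For the reverse inclusion, let $\pi \in \N_{\Sym{G}}(\Gamma(G))$. The key observation is that conjugation by $\pi$ carries the pair of commuting regular subgroups $\lambda(G)$, $\rho(G)$ to another pair of commuting regular subgroups of $\Gamma(G)$: commuting is preserved by conjugation, regularity is preserved by conjugation in $\Sym{G}$, and $\pi$ keeps everything inside $\Gamma(G)$. By Lemma~\ref{l:snwrc2_reg}, the hypothesis on the Chermak–Delgado lattice forces $\lambda(G)$, $\rho(G)$ to be the \emph{only} such pair; hence $\pi$ either normalizes both $\lambda(G)$ and $\rho(G)$, or interchanges them. Since $\iota \in \Gamma(G)$ satisfies $\iota \lambda_g \iota = \rho_g$, conjugation by $\iota$ swaps $\lambda(G)$ and $\rho(G)$. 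So after replacing $\pi$ by $\iota\pi$ if necessary — which is harmless, because $\iota \in \Gamma(G)$ and $(\Aut G)\Gamma(G)$ is a subgroup, so $\iota\pi \in (\Aut G)\Gamma(G)$ implies $\pi \in (\Aut G)\Gamma(G)$ — I may assume that $\pi$ normalizes $\lambda(G)$.

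It then remains to identify the normalizer of the left regular representation, which is the classical holomorph computation: if $\pi$ normalizes $\lambda(G)$, then after multiplying by a suitable $\lambda_g$ I may assume $\pi(1)=1$, and the identity $\pi(g) = (\pi \lambda_g \pi^{-1})(1)$ together with $\pi \lambda_g \pi^{-1} \in \lambda(G)$ forces $\pi$ to be an automorphism of $G$. Hence $\N_{\Sym{G}}(\lambda(G)) = \lambda(G)(\Aut G)$, and therefore $\pi \in \lambda(G)(\Aut G) \subseteq \Gamma(G)(\Aut G) = (\Aut G)\Gamma(G)$, as required. The main obstacle is the passage from ``$\pi$ normalizes $\Gamma(G)$'' to ``$\pi$ normalizes $\lambda(G)$, up to the involution $\iota$'': this is precisely where the uniqueness of the commuting regular pair, i.e.\ Lemma~\ref{l:snwrc2_reg} and through it the Chermak–Delgado hypothesis, does all the work. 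Once that reduction is secured, the holomorph description of $\N_{\Sym{G}}(\lambda(G))$ closes the argument at once.
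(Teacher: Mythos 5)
Your proposal is correct and takes essentially the same route as the paper's proof: conjugating the pair $\lambda(G)$, $\rho(G)$ by $\pi$ and invoking Lemma~\ref{l:snwrc2_reg} to conclude the pair is preserved, using $\iota\in\Gamma(G)$ (which conjugates $\lambda(G)$ to $\rho(G)$) to reduce to the case where $\pi$ normalizes $\lambda(G)$, and then the classical holomorph computation with $\pi(1)=1$ and $\pi(g)=\pi\lambda_g\pi^{-1}(1)$. The only difference is that you verify the easy inclusion $(\Aut G)\Gamma(G)\subseteq \N_{\Sym{G}}(\Gamma(G))$ explicitly on the generators, which the paper leaves implicit.
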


\begin{proof}
  Let $\pi \in \N_{\Sym{G}}(\Gamma(G))$.
  Then $\lambda(G)^{\pi}$ and $\rho(G)^{\pi}$
  are commuting regular subgroups of $\Gamma(G)$,
  and thus 
  $\{\lambda(G)^{\pi},\rho(G)^{\pi}\}
   = \{ \lambda(G), \rho(G)\} $.
  Since $\lambda(G)$ and $\rho(G)$ are conjugate in 
  $\Gamma(G)$, 
  we may assume that $\lambda(G)^{\pi}= \lambda(G)$.
  Thus $\pi \lambda_g \pi^{-1} = \lambda_{\alpha g}$
  for some bijection $\alpha\colon G\to G$.
  Clearly, $\alpha$ is a group automorphism.
  
  As $\lambda(G)$ acts transitively on $G$, we may assume
  $\pi(1) = 1$. 
  But then 
  $\pi(g) = \pi\lambda_g \pi^{-1}(1)
          = \lambda_{\alpha g}(1)
          = \alpha(g)$,
  so $\pi\in \Aut G$.
\end{proof}

The conclusion of this corollary is also true for some other
groups (for example, $G=S_3$),
but not for all groups (for example, $G= S_3 \times S_3$).

\section{The combinatorial symmetry group of the Birkhoff polytope}
Let $D\colon G \to \GL(d,\reals)$ be a faithful representation
and let $P(D) = \conv\{ D(g) \mid g\in G \}$
be the corresponding representation polytope.
Then the vertices of $P(D)$ correspond to the elements of $G$.
We may thus view the affine and combinatorial symmetries
as permutations of $G$ itself.

\begin{lemma}\label{l:reppoltrivsyms}
  Let 
  $D\colon G\to \GL(d,\reals)$
  be a faithful representation
  and $P(D)$ the representation polytope.  
  Then the affine symmetry group $\AGL(P(D))$ 
  as permutation group on $G$ contains 
  $\Gamma(G)$ as defined in the last section.
\end{lemma}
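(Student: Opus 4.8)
The plan is to realise each of the generators $\lambda_g$, $\rho_g$ and $\iota$ of $\Gamma(G)$ by a concrete affine transformation of the ambient matrix space $\reals^{d\times d}$ that stabilises $P(D)$ and induces exactly that permutation of the vertices. Since $D$ is faithful, the vertices $D(g)$ of $P(D)$ are pairwise distinct, so any invertible linear map of $\reals^{d\times d}$ that permutes the set $\{D(g)\mid g\in G\}$ automatically carries $P(D)=\conv\{D(g)\}$ onto itself and hence restricts to an affine symmetry; its induced permutation of $G$ is read off from its action on the $D(g)$.

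First I would handle left and right multiplication, which are genuinely linear. For fixed $g\in G$, the map $M\mapsto D(g)M$ is an invertible linear endomorphism of $\reals^{d\times d}$ sending the vertex $D(x)$ to $D(g)D(x)=D(gx)$, so it permutes the vertices according to $\lambda_g$. Similarly $M\mapsto M\,D(g)^{-1}$ sends $D(x)$ to $D(x)D(g)^{-1}=D(xg^{-1})$ and thus realises $\rho_g$. Both maps lie in $\AGL(P(D))$ and induce $\lambda_g$, respectively $\rho_g$, on $G$.

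The only generator not covered by an obviously linear map is the inversion $\iota$, since $M\mapsto M^{-1}$ is not affine; this is the step that requires an idea. The remedy is to pass to an orthogonal representation. As $G$ is finite there is a $G$-invariant inner product on $\reals^d$, so after conjugating $D$ by a fixed matrix we may assume $D(x)^{\top}=D(x)^{-1}$ for all $x\in G$. This reduction is harmless: replacing $D$ by an equivalent representation conjugates $P(D)$ by a fixed linear map while respecting the labelling of vertices by $G$, and hence does not change the permutation group induced on $G$. With $D$ orthogonal, the transpose $M\mapsto M^{\top}$ is an invertible linear map of $\reals^{d\times d}$ sending $D(x)$ to $D(x)^{\top}=D(x^{-1})$, so it permutes the vertices according to $\iota$.

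Finally, as $\lambda_g$, $\rho_g$ and $\iota$ all lie in $\AGL(P(D))$ regarded as permutations of $G$, and these elements generate $\Gamma(G)$ by definition, the inclusion $\Gamma(G)\leq\AGL(P(D))$ follows. I expect the main obstacle to be precisely the nonlinearity of matrix inversion; once it is circumvented by orthogonalising $D$, the remaining verifications are immediate computations on the vertices.
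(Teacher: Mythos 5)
Your proposal is correct and takes essentially the same route as the paper: $\lambda_g$ and $\rho_g$ are realized by left and right matrix multiplication, and $\iota$ by transposition after passing to an orthogonal representation. The only difference is that you spell out the reduction to the orthogonal case (averaging to get a $G$-invariant inner product, then conjugating by a fixed matrix, which preserves the labelled vertex set and hence the induced permutation group on $G$), whereas the paper simply cites Friese--Ladisch, Prop.~6.4, for this step.
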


\begin{proof}
  The left multiplications $\lambda_g$ are realized
  by left multiplication with 
  $D(g)$, and the right multiplications $\rho_g$
  by right multiplication with $D(g)^{-1}$.
  If $D$ is an orthogonal representation, then the 
  permutation $g\mapsto g^{-1}$ is realized by transposing 
  matrices, sending $D(g) $ to $D(g)^t = D(g^{-1})$.
  The general case (which we will not need)
  can be reduced to the orthogonal 
  case~\cite[Prop.~6.4]{FrieseLadisch16a}.
\end{proof}

Now let $P\colon G= S_n \to \GL(n,\reals)$ be the 
standard permutation representation of 
the symmetric group~$S_n$, and let
\[ B_n := \conv\{ P(\sigma) \mid \sigma\in S_n\}
\]
be the Birkhoff polytope.
Theorem~\ref{main:birkhoff_symgrp} claims that 
$\Gamma(S_n)$ is the combinatorial symmetry group of
$B_n$.
(The second claim of Theorem~\ref{main:birkhoff_symgrp} is that
these symmetries come from isometries of the matrix space.
This is then clear, since
the symmetries in $\Gamma(S_n)$ even act by permuting coordinates of
the matrices.)

\begin{proof}[Proof of Theorem~\ref{main:birkhoff_symgrp}]
  Recall that the Birkhoff polytope consists of the doubly
  stochastic matrices~\cite[Corollary~1.4.14]{LovaszPlummer86}.
  In particular, for each index pair $(i,j)$,
  the equality
  $a_{ij}=0$ describes a facet of the Birkhoff polytope.
  Thus its facets, as subsets of $S_n$, are given by
  the $n^2$ subsets
  \[ F_{ij} = \{\pi\in S_n \mid \pi(i)\neq j \},
     \quad \text{$i$, $j=1$, $\dotsc $, $n$}.
  \]
  It will be more convenient to work with the complements
  \[ A_{ij}= S_n \setminus F_{ij}
           = \{ \pi \in S_n \mid \pi(i)=j \}
  \] 
  of the facets.
  For $\sigma$, $\tau\in S_n$, 
  we have 
  $\sigma A_{ij} \tau^{-1} = A_{\tau i, \sigma j} $.
  We also have 
  $ A_{ij}^{-1} := \{ \pi^{-1} 
                      \mid \pi \in A_{ij}
                   \}
                 = A_{ji}$.
  Moreover, for $i$, $j$, $k$ and $l$ in $\{1,\dotsc,n\}$
  we have
  \[ \card{A_{ij}\cap A_{kl}}
      =
      \begin{cases}
        (n-1)!, & \text{if } i=k, j=l, \\
        0       & \text{if } i=k, j\neq l,\\
        0       & \text{if } i\neq k, j=l, \\
        (n-2)! \quad & \text{otherwise}.
      \end{cases}
  \]  
  Any combinatorial symmetry $\alpha$ permutes the facets and thus
  the sets $A_{ij}$, and preserves cardinalities of their
  intersections.
  
  Let $\alpha\colon S_n \to S_n$ be an arbitrary
  combinatorial symmetry of the Birkhoff polytope.
  We have to show that $\alpha\in \Gamma(S_n)$,
  the group containing the maps
  $\pi \mapsto \sigma \pi^{\pm 1} \tau^{-1}$.
  After replacing $\alpha$ by $\gamma \circ \alpha$
  for some $\gamma\in \Gamma(S_n)$ of the form
  $\gamma(\pi) = \sigma \pi \tau^{-1}$,
  we may  assume that $\alpha(A_{11})= A_{11}$.
  Then $\card{\alpha(A_{12})\cap A_{11}} = \card{A_{12}\cap A_{11}}=0$,
  and thus either $\alpha(A_{12})= A_{1j}$ for some $j\neq 1$
  or $\alpha(A_{12})=A_{j1}$ for some $j\neq 1$.
  If the latter is the case, we compose $\alpha$
  with the map $\pi\mapsto \pi^{-1}$,
  so we may assume that $\alpha(A_{12})=A_{1j}$.
  
  Multiplying $A_{1j}$ from the left with the transposition
  $(2,j)$ yields the set $A_{12}$,
  and so we can assume that $\alpha(A_{12})= A_{12}$.
  
  Now for $j\geq 3$, the set $\alpha(A_{1j})$ has empty intersection
  with $A_{11}$ and $A_{12}$ and thus
  $\alpha(A_{1j}) \in \{ A_{1k}\mid k\geq 3\}$.
  Thus $\alpha$ induces a permutation 
  $\sigma$ of $\{3,\dotsc, n\}$ defined by
  $\alpha(A_{1j})= A_{1,\sigma j}$.
  Thus $\sigma^{-1} \alpha(A_{1j})= A_{1j}$,
  and we may assume that $\alpha(A_{1j})=A_{1j}$
  for all $j$.
  Similarly, we can assume that
  $\alpha(A_{j1})= A_{j1}$ for all $j$.
  
  Thus, after composing $\alpha$ with suitable elements
  from $\Gamma(S_n)$, 
  we may assume that $\alpha$ leaves each of the sets
  $A_{1j}$ and $A_{j1}$ invariant.
  For $k\geq 2$, $l\geq 2$ we have that
  $A_{kl}$ is the unique set $S$ among the sets
  $A_{ij}$ (with $i\geq 2$, $j\geq 2$)
  such that $S\cap A_{k1}=\emptyset = S\cap A_{1l}$.
  It follows that
  $\alpha(A_{kl})=A_{kl}$ for all $k$, $l$.
  Thus $\alpha$ is the identity.
  It follows that the original $\alpha$ was already in 
  $\Gamma(S_n)$. 
\end{proof}

\section{Characterization of the Birkhoff polytope}
In this section, we prove Theorem~\ref{main:birkhoff_unique}.
We first show the following weaker result.

\begin{lemma}\label{l:snbirk_unique}
  Let $D\colon S_n \to \GL(d,\reals)$ be a representation 
  such that the representation polytope~$P(D)$ is combinatorially
  equivalent to the Birkhoff polytope.
  Then $D$ is effectively equivalent to 
  the standard permutation representation $P$ of $S_n$.  
\end{lemma}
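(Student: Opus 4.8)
The plan is to combine the computation of the combinatorial symmetry group (Theorem~\ref{main:birkhoff_symgrp}) with a rigidity argument for the facet-defining functionals of~$P(D)$.

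First I would record that $D$ is forced to be faithful: the vertices of $P(D)$ are exactly the distinct matrices $D(g)$, so their number is $\card{S_n/\ker D}$, which must equal the number $n!$ of vertices of~$B_n$. Next, the combinatorial equivalence gives a bijection $\beta\colon S_n\to S_n$ of the vertex sets (via $D(g)$ and $P(h)$) which conjugates the combinatorial symmetry group of $P(D)$ isomorphically onto that of~$B_n$. By Lemma~\ref{l:reppoltrivsyms} the former contains $\Gamma(S_n)$, while by Theorem~\ref{main:birkhoff_symgrp} the latter equals $\Gamma(S_n)$; hence both equal $\Gamma(S_n)$ and $\beta$ normalizes $\Gamma(S_n)$ in $\Sym{S_n}$. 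For $n\geq 4$, where the Chermak--Delgado lattice of $S_n$ is $\{1,S_n\}$ (Lemma~\ref{l:sn_cent_est}), Corollary~\ref{c:normlgamma} gives $\beta\in(\Aut S_n)\Gamma(S_n)$. Composing $\beta$ with a suitable element of $\Gamma(S_n)$ (a symmetry of both polytopes) I may assume $\beta=\alpha$ is a group automorphism, and then replacing $D$ by $D\circ\alpha^{-1}$ I may assume that the \emph{identity} map on $S_n$ is a combinatorial equivalence $P(D)\to B_n$. In particular $P(D)$ has exactly the facets $F_{ij}=\{g:g(i)\neq j\}$, with complements $A_{ij}=\{g:g(i)=j\}$.

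The heart of the argument is to promote this matching of facets to an inclusion of representations. Each facet $F_{ij}$ is cut out by an affine functional $\ell_{ij}$; composing with $D$ and normalizing, I obtain a function $\phi_{ij}\in V_D$ (the span of the matrix coefficients $g\mapsto\ell(D(g))$) that vanishes on $F_{ij}$ and is strictly positive on $A_{ij}$. Now I would exploit that $\Gamma(S_n)$ acts on $P(D)$ by \emph{affine} symmetries (Lemma~\ref{l:reppoltrivsyms}) permuting the facets transitively, as in the proof of Theorem~\ref{main:birkhoff_symgrp} ($\sigma A_{ij}\tau^{-1}=A_{\tau i,\sigma j}$). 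Because $\Gamma(S_n)$ is finite, an element stabilizing $F_{11}$ can only scale the chosen $\phi_{11}$ by a positive real root of unity, hence fixes it; so transporting a single $\phi_{11}$ around its orbit is well defined and yields $\phi_{ij}(g)=\phi_{11}(\sigma^{-1}g\tau)$ for appropriate $\sigma,\tau$. Moreover the stabilizer of $F_{11}$ in $\Gamma(S_n)$ already acts transitively on the vertex set $A_{11}\iso S_{n-1}$ (two--sided translation of $S_{n-1}$), so the fixed function $\phi_{11}$ is constant there. Hence $\phi_{ij}=c\,\mathbf 1_{A_{ij}}$ for one common constant $c>0$, i.e. $\phi_{ij}(g)=c\,P(g)_{ij}$. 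I expect this normalization step---turning a priori unrelated facet functionals into a single $\Gamma(S_n)$-orbit of honest indicator functions---to be the main obstacle, and the finiteness and transitivity of $\Gamma(S_n)$ to be exactly what makes it work.

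Granting this, all permutation matrix coefficients $g\mapsto P(g)_{ij}$ lie in $V_D$, so the matrix-coefficient space $V_P$ of $P$ is contained in $V_D$. Since these spaces are direct sums over the distinct irreducible constituents, every constituent of $P$---the trivial character and the standard $(n-1)$-dimensional character---is a constituent of~$D$. I would then close with a dimension count: as the trivial character occurs in $D$, the vertices $D(g)$ lie in an affine hyperplane off the origin, so $\dim P(D)=\bigl(\sum_{\chi}(\dim\chi)^2\bigr)-1$, the sum over the distinct constituents $\chi$ of~$D$. Combinatorial equivalence forces $\dim P(D)=\dim B_n=(n-1)^2$, whence $\sum_{\chi}(\dim\chi)^2=1+(n-1)^2$. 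But the trivial and standard constituents already contribute $1^2+(n-1)^2$, leaving no room for others; thus $D$ and $P$ have exactly the same irreducible constituents and are in particular stably equivalent. Unwinding the reduction by~$\alpha$, the original $D$ is effectively equivalent to~$P$. The finitely many small cases $n\leq 3$, where Corollary~\ref{c:normlgamma} does not apply, I would settle directly or by the classification in \cite[Theorem~4.1]{BHNP09}.
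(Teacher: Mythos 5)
Your proposal is correct, and its first half is identical to the paper's proof: identify the vertices with group elements, note (as you do) that $D$ must be faithful, use Lemma~\ref{l:reppoltrivsyms} and Theorem~\ref{main:birkhoff_symgrp} to see that the combinatorial symmetry group of $P(D)$ equals $\Gamma(S_n)$ and that the equivalence normalizes it, and apply Corollary~\ref{c:normlgamma} (via Lemma~\ref{l:sn_cent_est}) to reduce to the case where $g\mapsto D(g)$ is itself a combinatorial equivalence, so that the sets $A_{ij}$ index the facets of $P(D)$. Where you genuinely diverge is in how the standard character $\chi$ is extracted from this facet matching. The paper's mechanism is shorter and uses a single facet: after deleting trivial constituents one has $\sum_g D(g)=0$, so the vertex sum over the facet $D(S_n\setminus H)$ (with $H$ a point stabilizer) is nonzero, hence $\sum_{h\in H}D(h)\neq 0$, i.e.\ $[\phi_H,1_H]\neq 0$, and Frobenius reciprocity gives $[\phi,\chi]\neq 0$. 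Your mechanism is heavier but sound, and in fact proves more: the supporting functional of a facet is unique up to a positive scalar on the affine hull, so a finite-order affine symmetry stabilizing the facet scales it by a positive root of unity and therefore fixes it; the stabilizer of $A_{11}$ inside the two-sided translations (which act linearly, so no appeal to the orthogonal reduction in Lemma~\ref{l:reppoltrivsyms} is needed at this point) acts transitively on $A_{11}$, making the functional constant there, hence a positive multiple of $\mathbf{1}_{A_{11}}$; translating around the facet orbit then places every $\mathbf{1}_{A_{ij}}$, i.e.\ every matrix coefficient of the standard representation, in the span $V_D$ of affine functions of $D$, whence $\chi$ is a constituent of $\phi$ by linear independence of matrix-coefficient spaces of inequivalent irreducibles. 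Both proofs close with the same dimension count from \cite[Theorem~3.2]{guralnickperkinson06}. Two minor blemishes in yours: first, you cannot conclude that the trivial character is a constituent of $D$, since the constants lie in $V_D$ automatically whether or not $1_{S_n}$ occurs in $\phi$; this is immaterial, because the dimension formula sums over the \emph{nontrivial} distinct constituents and stable equivalence ignores trivial ones, so your count still forces $\chi$ to be the only nontrivial constituent. Second, for $n=3$ note that \cite[Theorem~4.1]{BHNP09} classifies permutation polytopes, whereas $D$ here is an arbitrary representation; but the direct check is immediate (the nontrivial real irreducible characters of $S_3$ are the sign and the $2$-dimensional standard character, and only the constituent set $\{\chi\}$ yields $\dim P(D)=4=\dim B_3$), while the paper instead relies on the remark that the conclusion of Corollary~\ref{c:normlgamma} also holds for $S_3$.
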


\begin{proof}  
  We have to show that $D$ has the same nontrivial constituents
  as $P$, up to automorphisms of $S_n$.
  Since we can replace $D$ by a stably equivalent representation,
  we may (and do) assume that
  the trivial character is not a constituent of the character of $D$.
  
  A combinatorial isomorphism from the Birkhoff polytope~$B_n$
  onto $P(D)$ sends a vertex $P(g)$ of $B_n$ (where $g\in S_n$)
  to a vertex $D(\alpha(g))$ of $P(D)$,
  where $\alpha\colon S_n \to S_n$ is a permutation of $S_n$.
  Then the map sending
  $\gamma\in \Sym{S_n}$ to 
  $\alpha \circ \gamma\circ \alpha^{-1}$
  is an isomorphism from the combinatorial symmetry group of 
  $B_n$ onto the combinatorial symmetry group of $P(D)$.
  The combinatorial symmetry group of the Birkhoff
  polytope is $\Gamma(S_n)$,
  and the combinatorial symmetry group of $P(D)$ contains 
  $\Gamma(S_n)$ (in its natural action on $P(D)$), 
  by Lemma~\ref{l:reppoltrivsyms}.
  Therefore, the combinatorial symmetry group of $P(D)$
  is just $\Gamma(S_n)$.
  It follows that $\alpha \in \N_{\Sym{S_n}}(\Gamma(S_n))$.
  By Lemma~\ref{l:sn_cent_est}, Corollary~\ref{c:normlgamma}
  applies to $S_n$ and thus 
  $\alpha\in (\Aut S_n)\Gamma(S_n)$.
  After multiplying $\alpha$ with an element of
  $\Gamma(S_n)$, we may thus assume
  $\alpha \in \Aut S_n$.
  Since then $D$ and $D\circ\alpha$ are effectively equivalent,
  we may assume that $\alpha = \Id_{S_n}$.
  This means that the combinatorial isomorphism from $B_n$ onto $P(D)$
  simply sends the vertex $P(g)$ to $D(g)$, for any $g\in S_n$.
  In particular, a subset of $S_n$ corresponds to a face(t)
  of $B_n$ (under $P$) 
  if and only if it corresponds to a face(t)
  of the representation polytope $P(D)$ (under $D$).

    Let $H \leq S_{n}$ be the stabilizer of a point,
    say $n$. (So $H\iso S_{n-1}$.)
    By the description of the facets of $B_n$, we know that
    $S_n \setminus H = \{g \in S_n\mid g(n)\neq n\}$
    corresponds to a facet of $B_n$.
    Thus $D(S_n\setminus H)$ is a facet of $P(D)$.
      
    Let $\phi$ be the character of $D$.
    The character of the standard permutation representation~$P$
    has the form  $(1_H)^{S_n} = 1_{S_n} + \chi$,
    where $\chi$ is an irreducible character of $S_n$.
    We are going to show that $\chi$ is the only 
    nontrivial irreducible constituent of~$\phi$.
    
    As we remarked in the first paragraph of the proof,
    we can assume that $\phi$ 
    does not contain the trivial character.
    The matrix $\sum_{g\in S_n} D(g)$ is fixed under
    multiplication with elements from $D(S_n)$,
    and since the trivial representation is not a constituent of~$D$,
    we have $\sum_{g\in S_n} D(g) = 0$.
    Geometrically, this means that
    the origin is the barycenter 
    of the representation polytope~$P(D)$.
    As $D(S_n\setminus H)$ is a facet of $P(D)$,
    we must have
      \[ \sum_{g\in S_n \setminus H} D(g) \neq 0
         ,\quad \text{and} \quad 
          \sum_{g\in  H} D(h) \neq 0.
      \]
    It follows that the restricted character
    $\phi_H$ contains the trivial character $1_H$
    as a constituent.
    Using Frobenius reciprocity and the fact that
    $(1_H)^{S_n} = 1_{S_n} + \chi$, we get 
    \[ 0 \neq [\phi_H, 1_H ] = [\phi, (1_H)^{S_n}]
       = [\phi,1_{S_n}] + [\phi,\chi] =  [\phi,\chi].  
    \]
    Thus $\chi$ is a constituent of $\phi$.

  Since dimension is a combinatorial invariant, we must have
  $\dim P(D) = \dim B_n = \chi(1)^2 $.
  On the other hand, we have
  $\dim P(D) = \sum_{\psi} \psi(1)^2$,
  where the sum runs over the nontrivial 
  irreducible constituents $\psi$ of $\phi$,
  not counting multiplicities~\cite[Theorem~3.2]{guralnickperkinson06}.
  It follows that $\chi$ is the only irreducible constituent of $\phi$,
  and thus $D$ and $P$ are stably equivalent.
\end{proof}

\begin{remark}
  In the preceding proof,
  we reduced to the case that the combinatorial isomorphism
  sends $P(g)$ to $D(g)$ (for any $g\in S_n$).
  If we could show that then $P(g)\mapsto D(g)$ 
  can be extended to an affine isomorphism, 
  Lemma~\ref{l:snbirk_unique} would follow
  from a characterization of effective equivalence
  by Baumeister and 
  Grüninger~\cite[Corollary~4.5]{BaumeisterGrueninger15}.
  But we do not know how to do this, or whether this is even true
  more generally 
  (for combinatorial isomorphisms of this form between
  representation polytopes of arbitrary groups).
\end{remark}

Finally, we prove our main result:

\begin{proof}[Proof of Theorem~\ref{main:birkhoff_unique}]
    Identify the vertices of $P(D)$ and $B_n$
    with $G$ and $S_n$, respectively.
    Let 
    $\gamma\colon G\to S_n$ be a combinatorial isomorphism.
    Then $\gamma$ induces an isomorphism $\kappa_{\gamma}$
    from the combinatorial symmetry group
    $A$ of $P(D)$ onto the combinatorial symmetry group
    $S_n\wr C_2$ of $B_n$ sending 
    $\alpha\in A$ to 
    $\kappa_{\gamma}(\alpha):=\gamma \circ \alpha \circ \gamma^{-1}$.
    Obviously,
    we have $\gamma(\alpha g) = \kappa_{\gamma}(\alpha)(\gamma g)$.
    Thus the pair
    $(\kappa_{\gamma}, \gamma)$ is an isomorphism
    from the $A$-set $G$ onto the 
    $(S_n\wr C_2)$-set $S_n$.    
    In particular, $\kappa_{\gamma}$ sends subgroups
    of $A$ which act regularly on $G$, onto subgroups
    of $S_n \wr C_2$ which act regularly on $S_n$.

    The left and right multiplications with elements of $G$
    induce regular subgroups of $A$.
    These are sent to regular subgroups
    $L$ and $R$ (say) of $S_n \wr C_2$.
    Since left and right multiplications centralize each other,
    the subgroups $L$ and $R$ centralize each other.
    If $n\geq 4$, then Lemma~\ref{l:snwrc2_reg} yields
    that $L= S_n \times 1$ or $L=1\times S_n$. 
    Since $L\iso G$, we have that $G\iso S_n$.
    In view of Lemma~\ref{l:snbirk_unique},
    this finishes the proof in case $n\geq 4$.
    
    In the case $n=3$, however, there is one additional
    possibility (up to conjugacy in $S_3 \wr C_2$),
    namely that 
    $L= R = C_2 \times C_3\iso C_6$.
    And indeed, the action of
    $C_2\times C_3$ on $\mat_3(\reals)$ yields the 
    Birkhoff polytope~$B_3$ as orbit polytope of $C_6$,    
    and this orbit polytope is affinely equivalent 
    to the representation polytope $P(D)$,
    where $D\colon C_6 \to \GL(4,\reals)$
    sends a generator of $C_6$ to
    \[ \begin{pmatrix*}[r]
         0 & 1 & & \\
        -1 & -1 & & \\
           &    & 0 & -1 \\
           &    & 1 & 1 
       \end{pmatrix*}.
    \]
\end{proof}  

\section*{Acknowledgments}
Part of the work was done while the second author visited 
Bielefeld University. 
We wish to thank the CRC 701
``\emph{Spectral Structures and Topological Methods in Mathematics}'' 
for its support.
The second author is also supported by the DFG 
through project SCHU 1503/6-1.

\printbibliography   


\end{document}